\documentclass[12pt]{amsart}
\usepackage{amsmath}
\usepackage{amssymb}
\usepackage{latexsym}

\vfuzz2pt 
\hfuzz2pt 
\newtheorem{theorem}{Theorem}[section]
\newtheorem{cor}[theorem]{Corollary}
\newtheorem{lemma}[theorem]{Lemma}

\newtheorem{prop}[theorem]{Proposition}

\newenvironment{proof*}{\vskip 2mm\noindent {}}{\hfill $\Box$ \vskip 2mm}
\numberwithin{equation}{section}
\newcommand{\C}{{\mathbb{C}}}
\newcommand{\D}{{\mathbb{D}}}
\newcommand{\G}{{\mathbb{G}}}

\newcommand{\E}{{\mathbb{E}}}



\begin{document}
\title{Geometric properties of the tetrablock}

\author{W\l odzimierz Zwonek}

\address{Institute of Mathematics, Faculty of Mathematics and Computer Science, Jagiellonian University,
\L ojasiewicza 6, 30-348 Krak\'ow, Poland}
\email{{Wlodzimierz.Zwonek}@im.uj.edu.pl}
\thanks{The work is partially supported by the grant of the Polish National Science Centre no.
UMO-2011/03/B/ST1/04758.}




\keywords{$\C$-convex domain, Lempert Theorem, tetrablock, symmetrized bidisc.}

\begin{abstract}
In this short note we show that the tetrablock is i $\C$-convex domain. In the proof of this fact a new class of ($\C$-convex) domains
is studied. The domains are natural caniddates to study on them the behavior of holomorphically invariant functions.
\end{abstract}

\maketitle

\section{Introduction}

Recently, two domains: symmetrized bidisc and tetrablock, arising from the $\mu$-synthesis, turned out to be
interesting examples in the geometric function theory. In particular, both domains are non-convex 
and even more, they cannot be exhausted by domains biholomorphic to convex ones and yet the Lempert function 
and the Carath\'eodory distance
coincide on them (see \cite{AY2004}, \cite{Cos}, \cite{Edi}, \cite{EKZ}). However, we have a more detailed knowledge on geometric properties
in the case of the symmetrized bidisc. In particular, it is known that
the symmetrized bidisc is $\C$-convex and may be exhausted by strongly linearly convex domains (see \cite{NPZ} 
and \cite{PZ}). All these facts show the importance of the domains from the point of view of the Lempert theorem on the equality of
the Lempert function and the Carath\'eodory distance (see
two papers of Lempert: \cite{Lem} and \cite{Lem1984}). We shall deal with analoguous properties of the tetrablock. 
More precisely, we show that
the tetrablock is $\C$-convex (see Corollary~\ref{cor:c-convexity}) which corrects the claim
stated in \cite{Kos} where, due to the typing error made 
in the formula describing the tetrablock, the converse was claimed.

It is interesting that the study of
geometric properties of the tetrablock can be reduced to considering domains being generalizations of the symmetrized bidisc which may lead
in the future to the study of other domains arising in the process
of symmetrization of a 'nice' pseudoconvex complete Reinhardt domain. The domains $\G_{2,\rho}$, which are $\C$-convex 
and aproximate the symmetrized bidisc, are natural candidates for the further study on the equality between the Lempert function and the Carath\'eodory distance
as well as on the possibility of exhausting them with strongly linearly convex domains.

Basic notions, definitions and properties from the theory of invariant functions, linearly and $\C$-convex domains that
we shall use in the paper may be found in \cite{JP}, \cite{APS} and \cite{Hor}. 

\section{Preliminary results}
Below we present analytic definitions of both domains that will be of interest to us.

Recall that  {\it tetrablock} may be defined as follows (see \cite{AWY})
\begin{equation}\label{def:tetrablock}
  \E=\{x\in\C^3:|x_1-\bar x_2x_3|+|x_2-\bar x_1x_3|+|x_3|^2<1\}
\end{equation}
and the {\it symmetrized bidisc} as follows (see e. g. \cite{AY2004})
\begin{equation}
  \G_2=\{(s,p)\in\C^2:|s-\bar sp|+|p|^2<1\}.
\end{equation}


Let us begin our study with a close relation between $\E$ and $\G_2$ which may be a good starting point for us.

\begin{lemma}[see \cite{Bha}]\label{lemma:1}
For any $x=(x_1,x_2,x_3)\in\C^3$ the following are equivalent
\begin{enumerate}
  \item  $x\in\E$,
  \item for any $\omega\in\C$ with $|\omega|=1$ we have $(x_1+\omega x_2,\omega x_3)\in\G_2$   
  \item for any $\omega\in\C$ with $|\omega|\leq 1$ we have $(x_1+\omega x_2,\omega x_3)\in\G_2$
\end{enumerate}
\end{lemma}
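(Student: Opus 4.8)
The plan is to establish the two nontrivial equivalences $(1)\Leftrightarrow(2)$ and $(2)\Leftrightarrow(3)$, the implication $(3)\Rightarrow(2)$ being immediate since $\{|\omega|=1\}\subset\{|\omega|\le1\}$. The equivalence $(1)\Leftrightarrow(2)$ is an elementary computation on the unit circle; the real content is $(2)\Rightarrow(3)$, where one must propagate information from the circle $|\omega|=1$ to the whole disc $|\omega|\le1$. My strategy for that step is to recast membership in $\G_2$ as a statement about the location of the roots of a quadratic whose coefficients depend \emph{affinely} (hence holomorphically) on $\omega$, and then to invoke the maximum principle.

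For $(1)\Leftrightarrow(2)$ I would first compute, for $s=x_1+\omega x_2$ and $p=\omega x_3$ with $|\omega|=1$,
\[
s-\bar s p=(x_1-\bar x_2 x_3)+\omega\,(x_2-\bar x_1 x_3),\qquad |p|^2=|x_3|^2,
\]
and set $a:=x_1-\bar x_2 x_3$, $b:=x_2-\bar x_1 x_3$. Since $\max_{|\omega|=1}|a+\omega b|=|a|+|b|$ and the circle is compact, the family of inequalities $|a+\omega b|+|x_3|^2<1$ for $|\omega|=1$ holds if and only if $|a|+|b|+|x_3|^2<1$, which is precisely $x\in\E$. This yields $(1)\Leftrightarrow(2)$ at once.

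For $(2)\Rightarrow(3)$ I would use that $(s,p)\in\G_2$ exactly when both roots of $z^2-sz+p$ lie in $\D$ (the Schur--Cohn condition for this monic quadratic reads $|p|<1$ and $|s-\bar s p|<1-|p|^2$, i.e.\ the defining inequality of $\G_2$). Thus $(x_1+\omega x_2,\omega x_3)\in\G_2$ means the quadratic
\[
q_\omega(z)=z^2-(x_1+\omega x_2)z+\omega x_3=z(z-x_1)-\omega(x_2z-x_3)
\]
has no zero in $\{|z|\ge1\}$. The relation $q_\omega(z)=0$ is equivalent to $\omega(x_2z-x_3)=z(z-x_1)$, so substituting $\zeta=1/z$ converts the whole question into a statement about the single holomorphic function
\[
F(\zeta)=\frac{x_2\zeta-x_3\zeta^2}{1-x_1\zeta},
\]
for which $\omega=1/F(\zeta)$. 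Here $F$ is holomorphic on a neighbourhood of $\overline{\D}$ because $|x_1|<1$ (an elementary estimate shows this follows from $x\in\E$), and $F(0)=0$. One then checks that $(3)$ is equivalent to $|F|<1$ throughout $\overline{\D}$, while $(2)$ is equivalent to $|F|\ne1$ throughout $\overline{\D}$. As $|F|$ is continuous on the connected set $\overline{\D}$ with $|F(0)|=0<1$, these two conditions coincide, and the maximum principle for $F$ is exactly what guarantees that the boundary control in $(2)$ spreads to the interior.

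The step I expect to be the main obstacle is the bookkeeping in this reformulation of $(2)\Leftrightarrow(3)$: checking that $|x_1|<1$ (so that $F$ is genuinely holomorphic up to the boundary), disposing of the degenerate values $x_2z=x_3$ (at which $q_\omega(z)=z(z-x_1)\ne0$ for $|z|\ge1$ once $|x_1|<1$), and matching the constraint $|\omega|\le1$ (resp.\ $|\omega|=1$) with $|F(\zeta)|\ge1$ (resp.\ $|F(\zeta)|=1$) through $\omega=1/F(\zeta)$. Once this dictionary is in place, the analytic heart of the matter is nothing more than the maximum modulus principle. As an alternative route for $(1)\Rightarrow(3)$, one could realise $x\in\E$ as $(a_{11},a_{22},\det A)$ for a $2\times2$ contraction $A$ and take $B_\omega=\mathrm{diag}(1,\omega)A$, which remains a contraction for $|\omega|\le1$ and satisfies $(\mathrm{tr}B_\omega,\det B_\omega)=(x_1+\omega x_2,\omega x_3)$; its eigenvalues then lie in $\D$, giving $(3)$ directly.
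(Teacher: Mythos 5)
Your proof is correct. For the equivalence of (1) and (2) you argue exactly as the paper does: the identity $s-\bar sp=(x_1-\bar x_2x_3)+\omega(x_2-\bar x_1x_3)$ for $|\omega|=1$, together with the fact that $\max_{|\omega|=1}|a+\omega b|=|a|+|b|$ is attained, is precisely the displayed observation the paper places after the lemma. The difference lies in what comes next: the paper stops there and defers the rest to the citation \cite{Bha}, whereas you actually supply a proof of $(2)\Rightarrow(3)$. Your main route --- encoding $(s,p)\in\G_2$ as ``both roots of $z^2-sz+p$ lie in $\D$'', inverting the root equation to the single function $F(\zeta)=(x_2\zeta-x_3\zeta^2)/(1-x_1\zeta)$, and observing that (2) amounts to $|F|\neq 1$ on $\overline{\D}$ while (3) amounts to $|F|<1$ there --- is sound; note that the final step needs only continuity of $|F|$ on the connected set $\overline{\D}$ together with $F(0)=0$, so the maximum principle you advertise is not actually doing any work, and the prerequisite $|x_1|<1$ is legitimately available because $(2)\Rightarrow(1)$ has already been established at that point. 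The degenerate cases (vanishing numerator, the pole of $F$) are correctly disposed of. Your alternative argument via the contraction realization $x=(a_{11},a_{22},\det A)$ and $B_\omega=\mathrm{diag}(1,\omega)A$ is essentially the operator-theoretic proof in the cited sources and is arguably the cleanest way to get $(1)\Rightarrow(3)$ directly. In short: where the paper proves something, you prove the same thing the same way; where the paper merely cites, you give a complete and correct argument.
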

Actually, the above equivalence follows from the following observation
\begin{multline} 
  |x_1+\omega x_2-(\bar x_1+\bar\omega \bar x_2)\omega x_3|=|x_1-\bar x_2x_3+\omega(x_2-\bar x_1x_3)|\leq\\
|x_1-\bar x_2x_3|+|x_2-\bar x_1x_3|.
\end{multline}
together with the fact that the inequality above becomes equality for some $|\omega|=1$.

Let us also denote $\Phi_{\omega}(x):=(x_1+\omega x_2,\omega x_3)$, $x\in\mathbb E$, 
$\omega\in\overline{\mathbb D}$. Define $\sigma(x)=(x_2,x_1,x_3)$, $x\in\mathbb C^3$, 
$\Psi_{\omega}:=\Phi_{\omega}\circ\sigma$.

{\bf Remark.} The equality (\ref{def:tetrablock}) defining the tetrablock allows to find out that
any point $x\in\partial\mathbb E$ such that $x_1\neq \bar x_2x_3$ and $x_2\neq \bar x_1 x_3$ is a smooth boundary point of $\partial\E$.
Moreover, the condition $x\in\partial\E$ and $x_1=\bar x_2 x_3$ or $x_2=\bar x_1x_3$ means that
$x=(re^{i\theta},re^{i\tau},e^{i(\theta+\tau)})$ or $x=(e^{i\theta},re^{i\tau},re^{i(\theta+\tau)})$ or
$x=(re^{i\theta},e^{i\tau},re^{i(\theta+\tau)})$
for some $r\in[0,1]$, $\theta,\tau\in\mathbb R$. 
Composing with automorphism of $\E$ given by the formula 
$(e^{-i\theta}y_1,e^{-i\tau}y_2,e^{-i(\theta+\tau)}y_3)$
we shall often be able to reduce the problem to special cases: 
$x=(r,r,1)$ or $x=(1,r,r)$ or $x=(r,1,r)$, $r\in[0,1]$.

At first we shall prove that for any $x\not\in\E$ 
there is a hyperplane passing through $x$ and omitting $\E$. Thus we show the following.

\begin{lemma}\label{lemma:2}
 $\E$ is linearly convex.
\end{lemma}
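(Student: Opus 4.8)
The plan is to exploit the description of $\E$ in terms of $\G_2$ furnished by Lemma~\ref{lemma:1} and to transport a separating line for $\G_2$ back to $\C^3$ through the linear maps $\Phi_\omega$. Concretely, I would fix an arbitrary point $x\in\C^3\setminus\E$ and produce a complex affine hyperplane through $x$ that is disjoint from $\E$, which is exactly what linear convexity demands.

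First I would use the equivalence of conditions (1) and (2) in Lemma~\ref{lemma:1}. Since $x\notin\E$, failure of condition (2) yields some $\omega_0$ with $|\omega_0|=1$ for which $\Phi_{\omega_0}(x)=(x_1+\omega_0 x_2,\omega_0 x_3)\notin\G_2$. The map $\Phi_{\omega_0}\colon\C^3\to\C^2$ is linear and surjective with one-dimensional kernel $\{(-\omega_0 t,t,0):t\in\C\}$, so the preimage of any complex affine line in $\C^2$ is a complex affine hyperplane in $\C^3$.

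Next I would invoke the linear convexity of $\G_2$, which follows from its known $\C$-convexity (see \cite{NPZ}): because $\Phi_{\omega_0}(x)$ lies in the complement of the open set $\G_2$, there is a complex line $\ell\subset\C^2$ passing through $\Phi_{\omega_0}(x)$ with $\ell\cap\G_2=\emptyset$. I then set $H:=\Phi_{\omega_0}^{-1}(\ell)$, which is a complex hyperplane containing $x$ since $\Phi_{\omega_0}(x)\in\ell$. To conclude I would verify $H\cap\E=\emptyset$: if some $y\in H\cap\E$ existed, then $\Phi_{\omega_0}(y)\in\ell$ by membership in $H$, while condition (2) of Lemma~\ref{lemma:1} applied to $y\in\E$ forces $\Phi_{\omega_0}(y)\in\G_2$, contradicting $\ell\cap\G_2=\emptyset$.

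The main point to secure is the input fact that $\G_2$ is linearly convex, so that a separating line $\ell$ through the prescribed point $\Phi_{\omega_0}(x)$ is available; the rest is the formal pullback above, where one only checks that $\Phi_{\omega_0}^{-1}(\ell)$ is a genuine hyperplane, which holds because $\omega_0\neq 0$ keeps the pulled-back defining coefficients $(a_1,a_1\omega_0,a_2\omega_0)$ nonzero whenever $(a_1,a_2)\neq 0$. If one prefers an argument not leaning on the $\C$-convexity of $\G_2$, the alternative is to exhibit explicit supporting lines for $\G_2$ at each of its boundary points and pull them back in the same way; the boundary normalizations recorded in the Remark, reducing to $x=(r,r,1)$, $(1,r,r)$ or $(r,1,r)$, would be the natural device for handling the non-smooth boundary points directly.
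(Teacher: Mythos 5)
Your argument is correct and is essentially identical to the paper's proof: both pick $\omega$ with $\Phi_{\omega}(x)\notin\G_2$ via Lemma~\ref{lemma:1}, invoke the linear convexity of $\G_2$ from \cite{NPZ} to get a separating line $l$, and observe that the hyperplane $\{ay_1+a\omega y_2+b\omega y_3=c\}=\Phi_{\omega}^{-1}(l)$ contains $x$ and misses $\E$. Your write-up merely makes explicit the ``one may easily check'' step (disjointness of the pulled-back hyperplane from $\E$) and the nondegeneracy of the pulled-back coefficients.
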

\begin{proof}
Let $x\not\in\E$. Making use of the description of $\mathbb E$ from Lemma~\ref{lemma:1} 
for such an $x$ we find an $\omega\in\overline{\D}$ 
with $\Phi_{\omega}(x)=(x_1+\omega x_2,\omega x_3)\not\in\G_2$. 
The linear convexity of $\G_2$ (see e. g. \cite{NPZ}) implies
that there is a line $l=\{(s,p)\in\C^2:as+bp=c\}$ with  $\Phi_{\omega}(x)\in l$ and $l\cap\G_2=\emptyset$. 
Then $x\in L:=\{y\in\C^3:a y_1+a\omega y_2+b\omega y_3=c\}$ and as one may easily check $L\cap\E=\emptyset$. 
\end{proof}

As we shall see a more refined procedure than the one described above will lead us to the precise description of supporting hyperplanes
which will finally lead to the proof of $\C$-convexity of $\E$. But before that we need some notations and auxiliary results.

\section{Two-dimensional symmetrized domains characterizing the tetrablock}

Motivated by Lemma~\ref{lemma:1} we shall see that the symmetrized images of special domains of the form
\begin{equation}
 D_{\rho}:=\{z\in\mathbb C^2:|z_1|,|z_2|<1,|z_1z_2|<\rho\}
\end{equation}
where $\rho\in(0,1]$ will play a special role in the study of the geometry of $\mathbb E$.

Let us denote
\begin{equation}
\mathbb G_{2,\rho}:=\pi(D_{\rho}),\;\rho\in(0,1]
\end{equation}
where $\pi(z):=(z_1+z_2,z_1z_2)$, $z\in\C^2$. Recall that $\G_2=\G_{2,1}$.


Then it follows from Lemma~\ref{lemma:1} that $\Phi_{\omega}(\mathbb E)\subset\mathbb G_{2,|\omega|}$, $0<|\omega|\leq 1$. 
We shall see that we even have the equality.
\begin{prop}\label{prop:sur}
\begin{equation}
 \Phi_{\omega}(\mathbb E)=\mathbb G_{2,|\omega|},\; 
\omega\in\overline{\mathbb D}\setminus\{0\}.
\end{equation}
\end{prop}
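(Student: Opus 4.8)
The inclusion $\Phi_{\omega}(\E)\subset\G_{2,|\omega|}$ has already been recorded, so the plan is to establish the reverse inclusion $\G_{2,|\omega|}\subset\Phi_{\omega}(\E)$. Fix $\omega\in\overline{\D}\setminus\{0\}$, write $r:=|\omega|$, and take an arbitrary $(s,p)\in\G_{2,|\omega|}=\pi(D_r)$. From $(s,p)=\pi(z)$ with $z\in D_r$ I read off the two facts I shall use: first $(s,p)\in\G_2$, i.e.\ $|s-\bar s p|+|p|^2<1$ (because $D_r\subset D_1$ and $\G_2=\pi(D_1)$), and second $|p|=|z_1z_2|<r$. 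In particular $1-|p|^2>0$ and, crucially, $1-|p|^2/r^2>0$. It then remains to produce $x\in\E$ with $\Phi_{\omega}(x)=(s,p)$, that is with $x_1+\omega x_2=s$ and $\omega x_3=p$.

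The equation $x_1+\omega x_2=s$ leaves one complex degree of freedom, and the naive candidates (such as $x=(z_1,z_2/\omega,z_1z_2/\omega)$) meet both constraints but lie in $\E$ only when $|z_2|<r$, which may fail. The idea is instead to seek the preimage on the slice where the second tetrablock summand vanishes, i.e.\ to impose $x_2-\bar x_1x_3=0$. Setting $x_3:=p/\omega$ and $x_2:=\bar x_1x_3$ and substituting into $x_1+\omega x_2=s$, the factor $\omega$ cancels and the condition becomes the real-linear equation $x_1+p\bar x_1=s$. Solving this together with its conjugate yields the explicit point
\[
x_1=\frac{s-\bar s p}{1-|p|^2},\qquad x_2=\bar x_1\,\frac{p}{\omega},\qquad x_3=\frac{p}{\omega},
\]
which by construction satisfies $\Phi_{\omega}(x)=(s,p)$.

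It remains only to check $x\in\E$. By the choice of the slice the summand $|x_2-\bar x_1x_3|$ is zero; using $\bar x_2=x_1\bar x_3$ the first summand equals $|x_1|\,(1-|x_3|^2)=|x_1|\bigl(1-|p|^2/r^2\bigr)$, and the last is $|x_3|^2=|p|^2/r^2$. Hence the defining expression of $\E$ at $x$ equals
\[
|x_1|\Bigl(1-\tfrac{|p|^2}{r^2}\Bigr)+\frac{|p|^2}{r^2},\qquad |x_1|=\frac{|s-\bar s p|}{1-|p|^2}.
\]
Since $1-|p|^2/r^2>0$, this quantity is $<1$ exactly when $|s-\bar s p|/(1-|p|^2)<1$, i.e.\ when $|s-\bar s p|+|p|^2<1$, which is precisely the membership $(s,p)\in\G_2$ recorded above. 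Thus $x\in\E$ and $(s,p)\in\Phi_{\omega}(\E)$, which finishes the argument.

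I expect the only genuine obstacle to be the discovery of the correct slice $x_2=\bar x_1x_3$: once one restricts to it, the parameter $\omega$ disappears from the equation for $x_1$ and the whole verification collapses to the inequality defining $\G_2$. I would also emphasize the single point where the stronger hypothesis $(s,p)\in\G_{2,|\omega|}$ (rather than merely $(s,p)\in\G_2$) is used, namely the strict inequality $|p|<r$ guaranteeing $1-|p|^2/r^2>0$; this is exactly what allows division by that factor without reversing the inequality, and it is why the statement is about $\G_{2,|\omega|}$.
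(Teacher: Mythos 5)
Your proof is correct and follows essentially the same route as the paper: the explicit preimage you construct, $x=\bigl(\tfrac{s-\bar s p}{1-|p|^2},\,\tfrac{\bar s-s\bar p}{1-|p|^2}\tfrac{p}{\omega},\,\tfrac{p}{\omega}\bigr)$, is exactly the point the paper writes down, and you have simply filled in the verification that the paper dismisses as ``easily follows.'' Your added remarks---the motivation via the slice $x_2=\bar x_1x_3$ and the observation that $|p|<|\omega|$ is the one place the hypothesis $(s,p)\in\G_{2,|\omega|}$ rather than $(s,p)\in\G_2$ is needed---are accurate and clarifying.
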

\begin{proof}
Actually, let $\rho:=|\omega|$ 
and take $(s,p)\in\mathbb G_{2,\rho}$. Put 
\begin{equation}
 x:=\left(\frac{s-\overline{s}p}{1-|p|^2},\frac{\overline{s}-s\overline{p}}{1-|p|^2}\frac{p}{\omega},\frac{p}{\omega}\right).
\end{equation}
It easily follows from (\ref{def:tetrablock}) that $x\in\mathbb E$ and $\Phi_{\omega}(x)=(s,p)$.
\end{proof}

A domain $D\subset\C^n$ is called {\it $\C$-convex} if for any affine complex line $l$ such that $l\cap D\neq\emptyset$ the set $l\cap D$ 
is connected and simply connected.
 
For a domain $D\subset\mathbb C^n$ and a point $a\in\C^n$ we denote by
$\Gamma_D(a)$ the set of all complex hyperplanes $L$ such that $(a+L)\cap D=\emptyset$. 
We shall often understand this set as the subset of $\mathbb P^{n-1}$: $L=\{x\in\C^n:\langle x,b\rangle=0\}$ 
is identified with $[b]\in\mathbb P^{n-1}$. 

Recall the basic criterion on $\C$-convexity that we shall use: the bounded domain $D\subset\C^n$, $n>1$, is $\C$-convex iff for an $x\in\partial D$ the set $\Gamma_D(x)$ is 
non-empty and connected (cf. e. g. Theorem 2.5.2 in \cite{APS}).

{\bf Remark.} It is elementary to see that for $n\geq 2$ 
\begin{equation}
\Gamma_{\mathbb D^n}(1,\ldots,1)=\{[(t_1,\ldots,t_n)]:(t_1,\ldots,t_n)\in[0,\infty)^n\setminus\{0\}\}.
\end{equation}

{\bf Remark.} The boundary point $(s,p)=\pi(\lambda_1,\lambda_2)$ 
of the domain $\mathbb G_{2,\rho}$, $\rho\in(0,1)$ is not smooth iff $\{|\lambda_1|,|\lambda_2|\}=\{1,\rho\}$. 

One may also easily see that 
\begin{itemize}
\item if $|\lambda_1|\leq \rho$, $|\lambda_2|=1$ 
then $\Gamma_{\mathbb G_{2,\rho}}(\pi(\lambda_1,\lambda_2))\supset\{[(-\lambda_2,1)]\}$ 
(if $|\lambda_1|<\rho$ then the inclusion becomes the equality),
\item if $|\lambda_1||\lambda_2|=\rho$, $\rho\leq |\lambda_j|\leq 1$, $j=1,2$, 
then $\Gamma_{\mathbb G_{2,\rho}}(\pi(\lambda_1,\lambda_2))\supset
\{[(0,1)]\}$ (if  $\rho<|\lambda_j|<1$, $j=1,2$ then the inclusion becomes the equality).
\end{itemize}

Let us formulate a result which essentially reduces the problem of description of $\Gamma_{\E}$ to that
of $\Gamma_{\G_{2,\rho}}$ (and $\Gamma_{\D^2}$).

\begin{lemma}\label{lemma:3}
 Let $x\in\C^3$ and let $0<|\omega|\leq 1$. Then the following are equivalent
\begin{itemize}
\item $[(a,c)]\in\Gamma_{\G_{2,|\omega|}}(\Phi_{\omega}(x))$ (respectively,  $[(a,c)]\in\Gamma_{\G_{2,|\omega|}}(\Psi_{\omega}(x))$),
\item $[(a,\omega a,\omega c)]\in \Gamma_{\E}(x)$ (respectively, 
$[(\omega a,a,\omega c)]\in\Gamma_{\E}(x)$).
\end{itemize}
\end{lemma}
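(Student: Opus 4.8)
The plan is to reduce the whole statement to a single linear‑algebraic identity between the functionals defining the two hyperplanes, and then to invoke the surjectivity supplied by Proposition~\ref{prop:sur}. Throughout I work with the convention that $[b]\in\mathbb P^{n-1}$ corresponds to the functional $y\mapsto\langle y,b\rangle=\sum_j y_jb_j$ (this is the bilinear, non‑conjugate pairing; it is the one consistent with the coefficients as written in the statement, whereas the Hermitian pairing would force a spurious $\bar\omega$). Under this convention, $[(a,c)]\in\Gamma_{\G_{2,|\omega|}}(\Phi_{\omega}(x))$ says precisely that the affine line
\[
 H:=\{(s,p)\in\C^2: as+cp=a(x_1+\omega x_2)+c\,\omega x_3\}
\]
misses $\G_{2,|\omega|}$, while $[(a,\omega a,\omega c)]\in\Gamma_{\E}(x)$ says that the affine hyperplane
\[
 \tilde H:=\{y\in\C^3: ay_1+\omega a\,y_2+\omega c\,y_3=ax_1+\omega a\,x_2+\omega c\,x_3\}
\]
misses $\E$.

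First I would record the elementary identity
\[
 ay_1+\omega a\,y_2+\omega c\,y_3=a(y_1+\omega y_2)+c(\omega y_3)=\langle\Phi_{\omega}(y),(a,c)\rangle,\qquad y\in\C^3,
\]
which expresses that the functional attached to $[(a,\omega a,\omega c)]$ on $\C^3$ is exactly the functional attached to $[(a,c)]$ on $\C^2$ precomposed with $\Phi_{\omega}$. Because $\Phi_{\omega}(x)$ is the base point used to center $H$, this immediately gives $\tilde H=\Phi_{\omega}^{-1}(H)$, that is, $y\in\tilde H$ if and only if $\Phi_{\omega}(y)\in H$.

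The one place where something beyond bookkeeping is required is passing between $\tilde H\cap\E=\emptyset$ and $H\cap\G_{2,|\omega|}=\emptyset$. Here I would apply Proposition~\ref{prop:sur}, which yields $\Phi_{\omega}(\E)=\G_{2,|\omega|}$. Combining this with $\tilde H=\Phi_{\omega}^{-1}(H)$ gives
\[
 \Phi_{\omega}(\tilde H\cap\E)=H\cap\Phi_{\omega}(\E)=H\cap\G_{2,|\omega|},
\]
so the two sets are empty or nonempty simultaneously, which is the desired equivalence. The subtlety to watch is that $\Phi_{\omega}$ is not injective on $\C^3$, so the implication from $H\cap\G_{2,|\omega|}=\emptyset$ to $\tilde H\cap\E=\emptyset$ genuinely uses the \emph{surjectivity} of $\Phi_{\omega}|_{\E}$ onto $\G_{2,|\omega|}$, not merely the inclusion $\Phi_{\omega}(\E)\subset\G_{2,|\omega|}$ noted before Proposition~\ref{prop:sur}. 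Finally, the parenthetical $\Psi_{\omega}$‑statement follows by the same computation applied to $\Psi_{\omega}(y)=(y_2+\omega y_1,\omega y_3)$, using that $\sigma$ is an automorphism of $\E$ (the defining inequality in~(\ref{def:tetrablock}) is symmetric in $x_1,x_2$), so that $\Psi_{\omega}(\E)=\Phi_{\omega}(\sigma(\E))=\Phi_{\omega}(\E)=\G_{2,|\omega|}$, and the coefficient vector $[(a,c)]$ now pulls back to $[(\omega a,a,\omega c)]$.
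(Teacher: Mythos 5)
Your proof is correct and follows essentially the same route as the paper: the pointwise identity $ay_1+\omega a y_2+\omega c y_3=\langle\Phi_{\omega}(y),(a,c)\rangle$ plus the surjectivity $\Phi_{\omega}(\E)=\G_{2,|\omega|}$ from Proposition~\ref{prop:sur}. One small slip in your commentary: it is the implication from $\tilde H\cap\E=\emptyset$ to $H\cap\G_{2,|\omega|}=\emptyset$ that genuinely needs surjectivity, while the direction you single out follows from the inclusion $\Phi_{\omega}(\E)\subset\G_{2,|\omega|}$ alone; this does not affect your main argument, since the set identity $\Phi_{\omega}(\tilde H\cap\E)=H\cap\Phi_{\omega}(\E)$ handles both directions at once.
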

\begin{proof} 

Let $l$ such that $l-\Phi_{\omega}(x)\in\Gamma_{\mathbb G_{2,|\omega|}}\Phi_{\omega}(x)$ be given by the equality
$as+cp=d$ (i. e. $[(a,b)]\in\Gamma_{\mathbb G_{2,|\omega|}}(\Phi_{\omega}(x))$). 
Then the equality $ay_1+\omega ay_2+\omega cy_3=d$ defines a hyperplane omitting
$\E$ and thus $[(a,\omega a,\omega c)]\in\Gamma_{\E}(\Phi_{\omega}(x))$.

To show the other implication let $L$ be such that $L-x\in\Gamma_{\mathbb E}(x)$. Let $L$ be given by the equation
$ay_1+\omega ay_2+\omega cy_3=d$ which may be written as
$a(y_1+\omega y_2)+c\omega y_3=d$. Then
from the equality $\Phi_{\omega}(\mathbb E)=\mathbb G_{2,|\omega|}$ (Lemma~\ref{lemma:2}) we get that the line given by the equality
$a s+cp=0$ belongs to $\Gamma_{\mathbb G_{2,|\omega|}}(\Phi_{\omega}(x))$.

\end{proof}

\begin{theorem}\label{thm:1}
Let $r\in[0,1]$. Then
\begin{multline}\label{case1}
\Gamma_{\mathbb E}(r,r,1)=\\
\bigcup\sb{0<|\omega|\leq 1}
\left\{[(\tilde s,\omega\tilde s,\omega\tilde p)],[(\omega\tilde s,\tilde s,\omega\tilde p)]:
[(\tilde s,\tilde p)]\in\Gamma_{\mathbb G_{2,|\omega|}}(r+r\omega,\omega)\right\}\cup\\
\{[(\tilde s,0,\tilde p)],[(0,\tilde s,\tilde p)]:
[(\tilde s,\tilde p)]\in\Gamma_{\mathbb D^2}(r,1)\}.
\end{multline}
Let $r\in[0,1)$. Then
\begin{equation}\label{case2}
 \Gamma_{\mathbb E}(1,r,r)=\{[(-1,-\omega,\omega):\omega\in\overline{\mathbb D}\}.
\end{equation}

\end{theorem}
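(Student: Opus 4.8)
The plan is to derive both identities by combining Lemma~\ref{lemma:3}, which transports supporting hyperplanes between $\E$ and the family $\G_{2,|\omega|}$, with a direct treatment of the two ``degenerate'' families of hyperplanes whose normal has a vanishing first or second coordinate. The key observation is that any class $[(b_1,b_2,b_3)]$ with $b_1b_2\neq0$ can be matched to the $\Phi$- or $\Psi$-form of Lemma~\ref{lemma:3}: if $|b_2|\leq|b_1|$ one sets $\omega:=b_2/b_1\in\overline{\D}\setminus\{0\}$ and writes $[(b_1,b_2,b_3)]=[(a,\omega a,\omega c)]$ with $a=b_1$, $c=b_3/\omega$, whereas if $|b_1|\leq|b_2|$ one sets $\omega:=b_1/b_2$ and uses $[(b_1,b_2,b_3)]=[(\omega a,a,\omega c)]$. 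Since at least one inequality always holds, Lemma~\ref{lemma:3} makes membership of such a class in $\Gamma_{\E}(x)$ \emph{equivalent} to membership of $[(a,c)]$ in $\Gamma_{\G_{2,|\omega|}}(\Phi_{\omega}(x))$ or $\Gamma_{\G_{2,|\omega|}}(\Psi_{\omega}(x))$, reducing this part of the problem to evaluating the two-dimensional sets $\Gamma_{\G_{2,|\omega|}}$ at the projected points via the Remarks above.

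To handle a normal with $b_2=0$ (resp.\ $b_1=0$) I would use the elementary identity $p_{13}(\E)=p_{23}(\E)=\D^2$, where $p_{13}(y)=(y_1,y_3)$, $p_{23}(y)=(y_2,y_3)$. Indeed, for $(u,v)$ with $|u|,|v|<1$ the choice $y_2=\bar u v$ turns the defining sum of (\ref{def:tetrablock}) into $|u|(1-|v|^2)+|v|^2<1$, so $(u,v)\in p_{13}(\E)$; conversely a triangle-inequality estimate on $|x_1|$ using the two moduli in (\ref{def:tetrablock}) forces $|u|(1-|v|^2)<1-|v|^2$, i.e.\ $|u|<1$. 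A hyperplane with normal $(b_1,0,b_3)$ depends only on $(y_1,y_3)$, hence omits $\E$ iff the corresponding line omits $p_{13}(\E)=\D^2$; so $[(b_1,0,b_3)]\in\Gamma_{\E}(x)$ iff $[(b_1,b_3)]\in\Gamma_{\D^2}(p_{13}(x))$, and symmetrically for $b_1=0$ via $p_{23}$. This is exactly the mechanism producing the $\Gamma_{\D^2}$ terms.

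For (\ref{case1}) I compute $\Phi_{\omega}(r,r,1)=\Psi_{\omega}(r,r,1)=(r(1+\omega),\omega)$, so the generic reduction lands in $\Gamma_{\G_{2,|\omega|}}(r+r\omega,\omega)$ and yields both families $[(\tilde s,\omega\tilde s,\omega\tilde p)]$ and $[(\omega\tilde s,\tilde s,\omega\tilde p)]$; the inclusion $\supseteq$ is immediate from Lemma~\ref{lemma:3}, and $\subseteq$ follows because every class with $b_1b_2\neq0$ arises this way. For the degenerate normals, $p_{13}(r,r,1)=p_{23}(r,r,1)=(r,1)$, so the preceding paragraph delivers exactly the two $\Gamma_{\D^2}(r,1)$ families, completing both inclusions.

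For (\ref{case2}) the decisive feature is that $r<1$ collapses everything to a single arc. Here $\Phi_{\omega}(1,r,r)=(1+\omega r,\omega r)=\pi(1,\omega r)$ with $|\omega r|<|\omega|\le1$, so by the first bullet of the Remark on $\G_{2,\rho}$ this is a smooth boundary point with $\Gamma_{\G_{2,|\omega|}}=\{[(-1,1)]\}$; transporting by the $\Phi$-form gives precisely $[(-1,-\omega,\omega)]$, which yields $\supseteq$ and, by the singleton structure, also pins down the $\Phi$-contribution in $\subseteq$. Nothing else survives: $\Psi_{\omega}(1,r,r)=(r+\omega,\omega r)=\pi(r,\omega)$ is an \emph{interior} point of $\G_{2,|\omega|}$ when $|\omega|<1$ (so $\Gamma=\emptyset$) and for $|\omega|=1$ gives only $[(-\omega,-1,1)]=[(-1,-\bar\omega,\bar\omega)]$, already in the arc; while for degenerate normals $p_{23}(1,r,r)=(r,r)$ is interior to $\D^2$ (no $b_1=0$ hyperplane supports) and $p_{13}(1,r,r)=(1,r)$ gives $\Gamma_{\D^2}(1,r)=\{[(1,0)]\}$, i.e.\ only $[(-1,0,0)]$, the $\omega=0$ endpoint. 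I expect the main obstacle to be precisely this modulus bookkeeping in (\ref{case2}): one must use $r<1$ to decide, for each $\omega$, whether $\pi(r,\omega)$ or $\pi(1,\omega r)$ is smooth, non-smooth, or interior, since that is what forces a one-parameter arc rather than the two-parameter family of (\ref{case1}); the projection identity $p_{13}(\E)=p_{23}(\E)=\D^2$ is the other non-formal ingredient, but it is a short direct estimate.
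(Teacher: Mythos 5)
Your proposal is correct and follows essentially the same route as the paper: the reduction of a normal $[(b_1,b_2,b_3)]$ with $b_1b_2\neq 0$ to the $\Phi$- or $\Psi$-form of Lemma~\ref{lemma:3} according to whether $|b_2|\leq|b_1|$ or $|b_1|\leq|b_2|$, the observation that $\Psi_{\omega}(1,r,r)$ is an interior point of $\G_{2,|\omega|}$ for $|\omega|<1$ (killing that contribution), and the treatment of degenerate normals via $p_{13}(\E)=p_{23}(\E)=\D^2$ (your choice $x_2=\bar u v$ is exactly the paper's substitution $\lambda=\bar y_1y_3$) all match the paper's argument.
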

{\bf Remark.} Smoothness of the boundary point $x$ together with the linear convexity means that $\Gamma_{\E}(x)$ is a singleton.
Note also that it follows from the earlier remark that the cases of boundary points considered in Theorem~\ref{thm:1} 
(i. e. $(r,r,1)$ and $(1,r,r)$)
represent all (up to linear automorphisms of $\E$) non-smooth boundary points and it means that Theorem~\ref{thm:1}
gives a complete description of $\Gamma_{\mathbb E}$.

{\bf Remark.} Note that $\Psi_{\omega}(1,r,r)=(r+\omega,r\omega)\in\mathbb G_{2,|\omega|}$, $r\in[0,1)$ and $\Phi_{\omega}(1,r,r)=
(1+r\omega,r\omega)\in\partial \mathbb G_{2,|\omega|}$ which implies that
the equality in (\ref{case2}) may be expressed in the form which would also be applicable in (\ref{case1}). 
We may namely write the right side
of (\ref{case2}) as follows:
\begin{multline}
 \bigcup\sb{0<|\omega|\leq 1}
\left\{[(\tilde s,\omega\tilde s,\omega\tilde p)]:[(\tilde s,\tilde p)]
\in\Gamma_{\mathbb G_{2,|\omega|}}(\Phi_{\omega}(1,r,r))\right\}\cup\\
\bigcup\sb{0<|\omega|\leq 1}\left\{[(\omega\tilde s,\tilde s,\omega\tilde p)]:
[(\tilde s,\tilde p)]\in\Gamma_{\mathbb G_{2,|\omega|}}(\Psi_{\omega}(1,r,r))\right\}\cup\\
\left\{[(\tilde s,0,\tilde p)]:[(\tilde s,\tilde p)]\in\Gamma_{\D^2}(1,r)\right\}.
\end{multline}

\begin{proof}
Note that in the case of the point $(r,r,1)$ we get that $\Phi_{\omega}(r,r,1)=\Psi_{\omega}(r,r,1)=(r+\omega r,\omega)\in\partial\E$, 
$\omega\in\overline{D}$. In this case the incusion '$\supset$' follows from the first part of Lemma~\ref{lemma:3}
and the fact that $\E\subset \D^3$.

To show the other inclusion let $L$ be such that $L-(r,r,1)\in\Gamma_{\mathbb E}(r,r,1)$. Let $L$ be given by the equation
$ay_1+by_2+cy_3=d$. Let $b=\omega a$ where $|\omega|\leq 1$ (the other case will be dealt with analoguously). If $\omega\neq 0$ then 
in view of the second part of Lemma~\ref{lemma:3} we get that $[(b,\frac{c}{\omega})]
\in\Gamma_{\mathbb G_{2,|\omega|}}(r+r\omega,\omega)$. 

Consider now $\omega=0$.
Then from the fact that $(r,r,1)+\lambda(-c,0,a)+\mu(0,1,0)\not\in\mathbb E$ for any $\lambda,\mu\in\mathbb C$ we get that the point 
$(r-c\lambda,1+a\lambda)\not\in\mathbb D^2$ - the last property follows from the fact that if the point 
$(y_1,y_3)$ satisfies the inequality
$|y_1-\bar\lambda y_3|+|\lambda-\bar y_1y_3|\geq 1-|y_3|^2$ for any $\lambda\in\mathbb C$ then $(y_1,y_3)\not\in\mathbb D^2$ 
(take e. g. $\lambda=\bar y_1y_3$). This finishes the proof of the case (\ref{case1}).

Let us consider now the point $(1,r,r)$, $r\in[0,1)$. Then  $\Phi_{\omega}(1,r,r)=(1+r\omega,r\omega)\in\partial \G_{2,|\omega|}$ 
and $\Psi_{\omega}(1,r,r)=(\omega+r,\omega r)\in\G_{2,|\omega|}(\omega+r,r\omega)$, $0<|\omega|\leq 1$. 

Note that $[(-1,1)]\in\Gamma_{\G_{2,|\omega|}}(1+r\omega,r\omega)$, $0<|\omega|\leq 1$ which in view of Lemma~\ref{lemma:3} implies 
that $[(-1,-\omega,\omega)]\in\Gamma_{\E}(1,r,r)$, $0<|\omega|\leq 1$. Certainly $[(-1,0,0)]\in\Gamma_{\E}(1,r,r)$. And this gives the 
inclusion $\supset$' in the case (\ref{case2}).

To show the other inclusion we proceed similarly as in the first case. Let $L$ be such that $L-(1,r,r)\in\Gamma_{\mathbb E}(1,r,r)$. 
Let $L$ be given by the equation
$ay_1+by_2+cy_3=d$. First note that $b=\omega a$ for some $|\omega|\leq 1$. Actually in the other case $a,b\neq 0$ and $a=\omega b$ 
for some $0<|\omega|<1$ 
and then in view of Lemma~\ref{lemma:3} we get that $[(a,\frac{c}{\omega})]\in\Gamma_{\G_{2,|\omega|}}(r+\omega,r\omega)=\emptyset$ 
- contradiction. 

If $\omega\neq 0$ then 
in view of Lemma~\ref{lemma:3} we get that $[(a,\frac{c}{\omega})]
\in\Gamma_{\mathbb G_{2,|\omega|}}(1+r\omega,r\omega)=\{[(-1,1)]\}$ so $[(a,b,c)]=[(-1,-\omega,\omega)]$. 

Consider now $\omega=0$.
Then as in the first case $[(a,c)]\in\Gamma_{\D^2}(1,r)$ which equals $\{[(1,0)]\}$ which finishes the proof.


\end{proof}

\section{$\C$-convexity of $\G_{2,\rho}$ and the tetrablock.}

In view of the above results we see that crucial for the proof of $\mathbb C$-convexity of the tetrablock is to find
the description of $\Gamma_{\mathbb G_{2,\rho}}$. Recall that $\G_2=\G_{2,1}$ is $\C$-convex (see \cite{NPZ}). We have the following.

\begin{theorem}\label{thm:2}
 $\mathbb G_{2,\rho}$ is $\mathbb C$-convex for any $\rho\in(0,1]$. 

Moreover, if $(s,p)=\pi_2(\lambda_1,\lambda_2)\in\partial \mathbb G_{2,\rho}$
then for any $\rho\in(0,1)$ we have
\begin{itemize}
 \item $\Gamma_{\mathbb G_{2,\rho}}(s,p)=\{[(-\lambda_2,1)]\}$ if $|\lambda_1|<\rho$, $|\lambda_2|=1$,
  \item $\Gamma_{\mathbb G_{2,\rho}}(s,p)=\{[(0,1)]\}$, if $|\lambda_1\lambda_2|=\rho$ and $|\lambda_1|,|\lambda_2|\in(\rho,1)$,
  \item the set $\{\frac{\tilde s}{\tilde p}:[(\tilde s,\tilde p)]\in\Gamma_{\mathbb G_{2,\rho}}(s,p)\}$ contains $0$ and is a convex set 
if $\{|\lambda_1|,|\lambda_2|\}=\{1,\rho\}$.
\end{itemize}
\end{theorem}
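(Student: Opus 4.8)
The plan is to prove $\C$-convexity of $\G_{2,\rho}$ via the criterion quoted in the excerpt: for a bounded domain it suffices to show that $\Gamma_{\G_{2,\rho}}(s,p)$ is nonempty and connected for every boundary point $(s,p)$. Since $\G_{2,\rho}$ is the symmetrized image $\pi(D_\rho)$ of the domain $D_\rho$, I would first want the two "easy" cases, where the boundary point is smooth, to follow from a direct identification of the unique supporting hyperplane; the two itemized formulas ($\{[(-\lambda_2,1)]\}$ and $\{[(0,1)]\}$) correspond exactly to the two inclusions already recorded in the Remark preceding the theorem, so the content there is to upgrade those inclusions to equalities and to check these sets are singletons (hence trivially connected).

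Let me sketch the structure in the order I would carry it out. First I would reduce to boundary points $(s,p)=\pi(\lambda_1,\lambda_2)$ and split according to the moduli of $\lambda_1,\lambda_2$, recalling from the earlier Remark that the non-smooth points are exactly those with $\{|\lambda_1|,|\lambda_2|\}=\{1,\rho\}$. For a smooth boundary point, linear convexity (which I would establish for $\G_{2,\rho}$ by an argument parallel to Lemma~\ref{lemma:2}, pulling back through $\pi$ to the linearly convex $D_\rho$) forces $\Gamma_{\G_{2,\rho}}(s,p)$ to be a singleton, so the first two itemized claims amount to verifying which hyperplane it is; here I would plug the candidate hyperplane into the defining inequality and check that it meets $\partial\G_{2,\rho}$ only at $(s,p)$, exactly as the parenthetical "equality" clauses in the Remark assert. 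The genuinely delicate case is the non-smooth one, $\{|\lambda_1|,|\lambda_2|\}=\{1,\rho\}$, where $\Gamma_{\G_{2,\rho}}(s,p)$ is a nontrivial set and I must show both that it is connected and that its image under $[(\tilde s,\tilde p)]\mapsto \tilde s/\tilde p$ is a convex set containing $0$.

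For that last case the key computation is to parametrize the admissible hyperplanes through $(s,p)$. Writing a hyperplane as $\tilde s\,s'+\tilde p\,p'=\tilde s\,s+\tilde p\,p$ and asking that it omit $\G_{2,\rho}$, I would translate the omitting condition into an inequality on the quantity $t:=\tilde s/\tilde p$ by testing against the symmetrized points $\pi(\zeta_1,\zeta_2)$ with $|\zeta_j|\le 1$, $|\zeta_1\zeta_2|\le\rho$. The expectation is that the condition "$\tilde s\, s + \tilde p\, p \notin \pi(D_\rho)$ as a linear functional value" collapses to a single real-affine inequality in $t$, whose solution set is a half-plane or a disc, and in particular convex and containing $t=0$ (the value $[(0,1)]$ coming from the $p$-coordinate extremality). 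Connectedness of $\Gamma_{\G_{2,\rho}}(s,p)$ then follows because the map $[(\tilde s,\tilde p)]\mapsto \tilde s/\tilde p$ is a homeomorphism onto its image away from $[(1,0)]$, and one checks separately that $[(1,0)]$ is not an omitting hyperplane, so the image in the $t$-coordinate captures the whole set.

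The main obstacle I anticipate is precisely this extremal case: correctly identifying the full set of omitting hyperplanes at a non-smooth point and proving the sharp claim that their $t$-image is convex. The difficulty is that $\pi$ is two-to-one and not a biholomorphism, so the supporting hyperplanes of $\G_{2,\rho}$ do not simply pull back from supporting hyperplanes of $D_\rho$; I expect to need an explicit optimization, maximizing the real part of the relevant linear functional over $\overline{D_\rho}$ with the constraint $|\zeta_1\zeta_2|\le\rho$ active, and to exploit the known $\C$-convexity of $\G_2=\G_{2,1}$ from \cite{NPZ} as the boundary case $\rho=1$ to guide and cross-check the computation.
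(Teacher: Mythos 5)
Your overall architecture coincides with the paper's: the two smooth cases are disposed of exactly as you describe (linear convexity plus smoothness forces a singleton, which the preceding Remark identifies), and the non-smooth case is indeed attacked through the coordinate $t=\tilde s/\tilde p$. But the heart of the theorem --- convexity of the $t$-image when $\{|\lambda_1|,|\lambda_2|\}=\{1,\rho\}$ --- is left at the level of an ``expectation,'' and the expectation is not what actually happens. The omitting condition does not collapse to a single real-affine inequality in $t$: a line through $(s,p)$ meets $\G_{2,\rho}$ exactly when it contains some $\pi(\mu_1,\mu)$ with $\rho<|\mu_1|<1$ and $|\mu_1\mu|<\rho$, so the complement of the $t$-set is the two-parameter family
\[
\left\{\frac{\lambda_1+\lambda_2-\mu_1-\mu}{\lambda_1\lambda_2-\mu_1\mu}\,:\,\rho<|\mu_1|<1,\ |\mu_1\mu|<\rho\right\}.
\]
The missing idea is to freeze $\mu_1$ and rewrite this quantity as
\[
\frac{1}{\lambda_1\lambda_2}\left(\frac{\lambda_1\lambda_2}{\mu_1}+\frac{\lambda_1+\lambda_2-\mu_1-\frac{\lambda_1\lambda_2}{\mu_1}}{1-\frac{\mu_1\mu}{\lambda_1\lambda_2}}\right),
\]
so that $\mu$ enters only through $w=\mu_1\mu/(\lambda_1\lambda_2)$, which ranges over the unit disc; since $w\mapsto 1/(1-w)$ maps $\D$ onto $\{\operatorname{Re}z>1/2\}$, the excluded set for each fixed $\mu_1$ is an open half-plane (the affine coefficient $\lambda_1+\lambda_2-\mu_1-\lambda_1\lambda_2/\mu_1$ never vanishes for $\rho<|\mu_1|<1$). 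The $t$-set is then an intersection of closed half-planes, hence convex. Without this computation you have no control over the shape of the excluded set --- your guess ``a half-plane or a disc'' for the whole excluded region is neither verified nor correct as stated --- so convexity, and with it connectedness, is not established.

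A second, concrete error: your plan to ``check separately that $[(1,0)]$ is not an omitting hyperplane'' fails. At $(s,p)=\pi(\lambda_1,\lambda_2)$ with $|\lambda_1|=1$ and $\lambda_2=\rho\lambda_1$, the line $\{s'=\lambda_1(1+\rho)\}$ does omit $\G_{2,\rho}$: if $\mu_1+\mu=\lambda_1(1+\rho)$ with $|\mu_1|=1-a<1$, $|\mu|=1-b<1$, then $a+b\le 1-\rho$ and hence $|\mu_1\mu|\ge \rho+ab>\rho$, so $\pi(\mu_1,\mu)\notin\G_{2,\rho}$. Thus $[(1,0)]$ can belong to $\Gamma_{\G_{2,\rho}}(s,p)$, the map to the $t$-coordinate does not capture all of $\Gamma_{\G_{2,\rho}}(s,p)$, and your proposed reduction of connectedness to convexity of the finite $t$-image needs to account for the point at infinity rather than exclude it.
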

\begin{proof} Let $\rho\in(0,1)$. The equalities in the first two cases follow from earlier remarks and the fact that the points considered there 
are smooth.


Let us consider the third case. Note that any complex line passing through $(s,p)$ and through a point from $\mathbb G_{2,\rho}$ 
must contain a point from the set $\pi(D_{\rho}\cap\{(\mu_1,\mu):\rho<|\mu_1|\})$ which implies that 
the set of points not in the set considered are the points of the form
\begin{equation}
 \frac{\lambda_1+\lambda_2-\mu_1-\mu}{\lambda_1\lambda_2-\mu_1\mu}
\end{equation}
where $\rho<|\mu_1|<1$, $|\mu\mu_1|<\rho$. The last may be given in the form
\begin{equation}
 \frac{1}{\lambda_1\lambda_2}\left(\frac{\lambda_1\lambda_2}{\mu_1}+\frac{\lambda_1+\lambda_2-\mu_1-
\frac{\lambda_1\lambda_2}{\mu_1}}{1-\frac{\mu_1\mu}{\lambda_1\lambda_2}}\right).
\end{equation}
Since the function $z\to 1/(1-z)$ maps the unit disc to $\{\operatorname{Re}z>1/2\}$ we easily get that for the fixed 
$\mu_1$ the set of points of the previous form for all $\mu$ with $|\mu_1||\mu|<\rho$ is an open half plane. 
Now the set of numbers of the set in the theorem
is the intersection of the complements of the sets of the last form. This implies that it is convex. We already know that 
$[(0,1)]\in\Gamma_{\G_{2,\rho}}(s,p)$ which finishes the proof. The fact that all sets $\Gamma_{\G_{2,\rho}}(x)$, $x\in\partial\G_{2,\rho}$,
are non-empty and connected implies the $\C$-convexity of $\G_{2,\rho}$. 

\end{proof}

\begin{cor}\label{cor:c-convexity}
 $\mathbb E$ is $\mathbb C$-convex.
\end{cor}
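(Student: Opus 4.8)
The plan is to apply the $\C$-convexity criterion recalled before the remark on $\Gamma_{\D^n}$: since $\E$ is a bounded domain (indeed $\E\subset\D^3$), it is $\C$-convex as soon as $\Gamma_{\E}(x)$ is non-empty and connected for every $x\in\partial\E$. Non-emptiness is free: by the linear convexity proved in Lemma~\ref{lemma:2}, every point outside the open set $\E$ -- in particular every boundary point -- carries a complex hyperplane missing $\E$, so $\Gamma_{\E}(x)\neq\emptyset$. For connectedness I would first dispose of the smooth boundary points, where the remark following Theorem~\ref{thm:1} guarantees that $\Gamma_{\E}(x)$ is a singleton, hence connected. The whole difficulty therefore concentrates on the non-smooth boundary points.

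By the remark following Lemma~\ref{lemma:1} together with the one following Theorem~\ref{thm:1}, every non-smooth boundary point of $\E$ is mapped by a diagonal linear automorphism $(y_1,y_2,y_3)\mapsto(e^{-i\theta}y_1,e^{-i\tau}y_2,e^{-i(\theta+\tau)}y_3)$ of $\E$ to one of the model points $(r,r,1)$ or $(1,r,r)$, $r\in[0,1]$. Such an automorphism is linear, so it induces a homeomorphism of $\mathbb{P}^2$ carrying $\Gamma_{\E}(x)$ onto $\Gamma_{\E}(Ax)$; in particular it preserves non-emptiness and connectedness, and it is enough to examine the two model families. The point $(1,r,r)$, $r\in[0,1)$, is immediate from (\ref{case2}): the set $\Gamma_{\E}(1,r,r)=\{[(-1,-\omega,\omega)]:\omega\in\overline{\D}\}$ is a continuous image of the disc $\overline{\D}$ and is therefore connected.

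The remaining model point $x=(r,r,1)$ is where the real work lies and is, I expect, the main obstacle. Here I would feed the description (\ref{case1}) with the structural information of Theorem~\ref{thm:2}. Writing $\Phi_\omega(r,r,1)=(r+r\omega,\omega)=\pi(\lambda_1,\lambda_2)$ one has $|\lambda_1\lambda_2|=|\omega|$, so every such boundary point of $\G_{2,|\omega|}$ falls into the second or the third bullet of Theorem~\ref{thm:2}; in either case the fibre $\Gamma_{\G_{2,|\omega|}}(r+r\omega,\omega)$ is connected (a singleton, or a set whose ratios form a convex subset of $\C$) and contains $[(0,1)]$. Pushing each fibre forward by the two continuous injections $[(\tilde s,\tilde p)]\mapsto[(\tilde s,\omega\tilde s,\omega\tilde p)]$ and $[(\tilde s,\tilde p)]\mapsto[(\omega\tilde s,\tilde s,\omega\tilde p)]$ keeps the two resulting families connected, and -- this is the decisive point -- both maps send $[(0,1)]$ to the single point $[(0,0,1)]$. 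Thus, for every $\omega$ with $0<|\omega|\le1$, the slice contributed to $\Gamma_{\E}(r,r,1)$ is connected and passes through the fixed ``anchor'' $[(0,0,1)]$.

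It remains to incorporate the $\omega=0$ stratum $\{[(\tilde s,0,\tilde p)],[(0,\tilde s,\tilde p)]:[(\tilde s,\tilde p)]\in\Gamma_{\D^2}(r,1)\}$. Since $\Gamma_{\D^2}(r,1)$ is connected and contains $[(0,1)]$ (it equals $\{[(0,1)]\}$ when $r<1$ and is an arc through $[(0,1)]$ when $r=1$, by the remark computing $\Gamma_{\D^n}$), this stratum is again connected and meets $[(0,0,1)]$. Consequently $\Gamma_{\E}(r,r,1)$ is a union of connected sets all passing through $[(0,0,1)]$, and a union of connected sets with a common point is connected; note that no continuity in $\omega$ is needed, the anchor alone glues the slices. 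This verifies the criterion at every boundary point and yields the $\C$-convexity of $\E$. The subtle ingredient throughout is the convexity asserted in the third bullet of Theorem~\ref{thm:2} for the non-smooth points of $\G_{2,\rho}$: it is exactly what forces the fibres in (\ref{case1}) to be connected, everything else being assembled around $[(0,0,1)]$.
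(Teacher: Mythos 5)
Your argument is correct and is essentially the paper's own proof: singleton fibres at smooth boundary points via linear convexity, reduction to the two model families $(r,r,1)$ and $(1,r,r)$, and gluing the connected slices of (\ref{case1}) at the common anchor $[(0,0,1)]$ using Theorem~\ref{thm:2}. The only detail to flag is that for $|\omega|=1$ the bullets of Theorem~\ref{thm:2} (stated for $\rho\in(0,1)$) do not literally apply, and one must invoke the corresponding connectivity of $\Gamma_{\G_2}(\pi(\lambda))$ and the membership of $[(0,1)]$ from \cite{NPZ}, exactly as the paper does.
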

\begin{proof} Linear convexity of $\E$ implies that in the case of a smooth boundary point $x\in\partial\E$
the set $\Gamma_{\E}(x)$ is a singleton. Consider then the non-smooth point $x\in\partial\E$. It is sufficient to consider the cases
\begin{itemize}
 \item $x=(r,r,1)$, 
$r\in[0,1]$,
\item $x=(1,r,r)$, $r\in[0,1)$.
\end{itemize}
Theorem~\ref{thm:1} together with Theorem~\ref{thm:2} (we also need to know that $\Gamma_{\G_2}(\pi(\lambda))$
is connected and contains the point $[(0,1)]$ - this follows from \cite{NPZ}) 
imply that the set $\Gamma_{\mathbb E}(r,r,1)$ is the union of connected sets 
whose intersection is non-empty (it contains the point $[(0,0,1)]$) so it is connected too. 

The fact that $\Gamma_{\mathbb E}(1,r,r)$ is connected follows immediately from its description. This finishes the proof.
\end{proof}

\end{document}